\documentclass{article}

\usepackage{arxiv}
\usepackage[utf8]{inputenc} 
\usepackage[T1]{fontenc}    
\usepackage{hyperref}       
\usepackage{url}            
\usepackage{booktabs}       
\usepackage{amsmath, amssymb, amsfonts,amsthm} 
\usepackage{nicefrac}       
\usepackage{microtype}      
\usepackage{graphicx}       
\usepackage{subcaption}     
\usepackage{pgfplots}       
\graphicspath{ {./images/} }

\newtheorem{Definition}{Definition}
\newtheorem{Theorem}{Theorem}

\newtheorem{Remark}{Remark}
\newtheorem{Example}{Example}
\newtheorem{Corollary}{Corollary}

\title{Developable Ruled Surfaces Generated by the Curvature Axis of a Curve}

\author{
  Ferhat Ta\c{s} \\
  Department of Computer Science, Faculty of Science \\
  Istanbul University, 34143 Istanbul, T\"{u}rkiye \\
  \texttt{tasf@istanbul.edu.tr} \\
  \And
  Rushan Ziatdinov \\
  Department of Industrial Engineering, College of Engineering \\
  Keimyung University, 704-701 Daegu, Republic of Korea \\
  \texttt{ziatdinov@kmu.ac.kr} \\
}

\begin{document}

\maketitle

\begin{abstract}
Ruled surfaces play an important role in various types of design, architecture, manufacturing, art, and sculpture. They can be created in a variety of ways, which is a topic that has been the subject of much discussion in mathematics and engineering journals. In geometric modelling, ideas are successful if they are not too complex for engineers and practitioners to understand and not too difficult to implement, because these specialists put mathematical theories into practice by implementing them in CAD/CAM systems. Some of these popular systems such as AutoCAD, Solidworks, CATIA, Rhinoceros 3D, and others are based on simple polynomial or rational splines and many other beautiful mathematical theories that have not yet been implemented due to their complexity. Based on this philosophy, in the present work, we investigate a simple way to generate ruled surfaces whose generators are the curvature axes of curves. We show that this type of ruled surface is a developable surface and that there is at least one curve whose curvature axis is a line on the given developable surface. In addition, we discuss the classifications of developable surfaces corresponding to space curves with singularities, while these curves and surfaces are most often avoided in practical design. Our research also contributes to the understanding of the singularities of developable surfaces and, in their visualisation, proposes the use of environmental maps with a circular pattern that creates flower-like structures around the singularities.\end{abstract}

\keywords{Singularity, curvature axis, developable surface, ruled surface, differential geometry, Frenet frame, visualization of surfaces}

\footnote{
    "If people do not believe that mathematics is simple, it is only because they do not realize how complicated life is." \textit{— John von Neumann}
}

\section{Introduction}

The theory of curves and the theory of ruled surfaces are closely related topics, both of which are essential in space kinematics. A few years ago we came across a problem concerning the relationship between space curves and ruled surfaces for which we could find no study in the literature. Specifically, it is obvious and simple that the curvature axis of a space curve draws a ruled surface along the curve. However, this fact has not yet been studied in the literature. The only theory known so far that combines these two topics is E. Study's correspondence principle \cite{study}.

Ruled surfaces are a class of mathematical surfaces that can be generated by moving lines, also known as \textit{generators} \cite{2,Tas1,Tas2}. In CAD systems, such surfaces can be generated by moving the cross-section along the \textit{rail curve} \cite{Bo}. A developable surface is a special type of ruled surface that can be flattened into a plane without stretching or tearing. Due to their simplicity and ease of fabrication, they are used in a variety of fields, \cite{Wang,Pet,Li}\footnote{Interested readers can watch the \href{https://www.youtube.com/watch?v=-jZfFgfGj1M}{YouTube video} about applications of ruled surfaces in architecture.}.

Ruled surfaces are widely used in the modeling and design of complex shapes and structures while facilitating the analysis of algebraic surface properties, \cite{1,Fern,Lang,Rav}. In addition, their practical importance extends to various industries, including manufacturing and construction, where they provide a streamlined process for fabricating flat sheets of material. Ruled surfaces play a central role in mechanical engineering, contributing to the design of aircraft, automobiles, and applications ranging from agriculture to light manufacturing.


\begin{figure}
     \centering
     \begin{subfigure}[b]{0.45\textwidth}
         \centering
          \includegraphics[width=\linewidth]{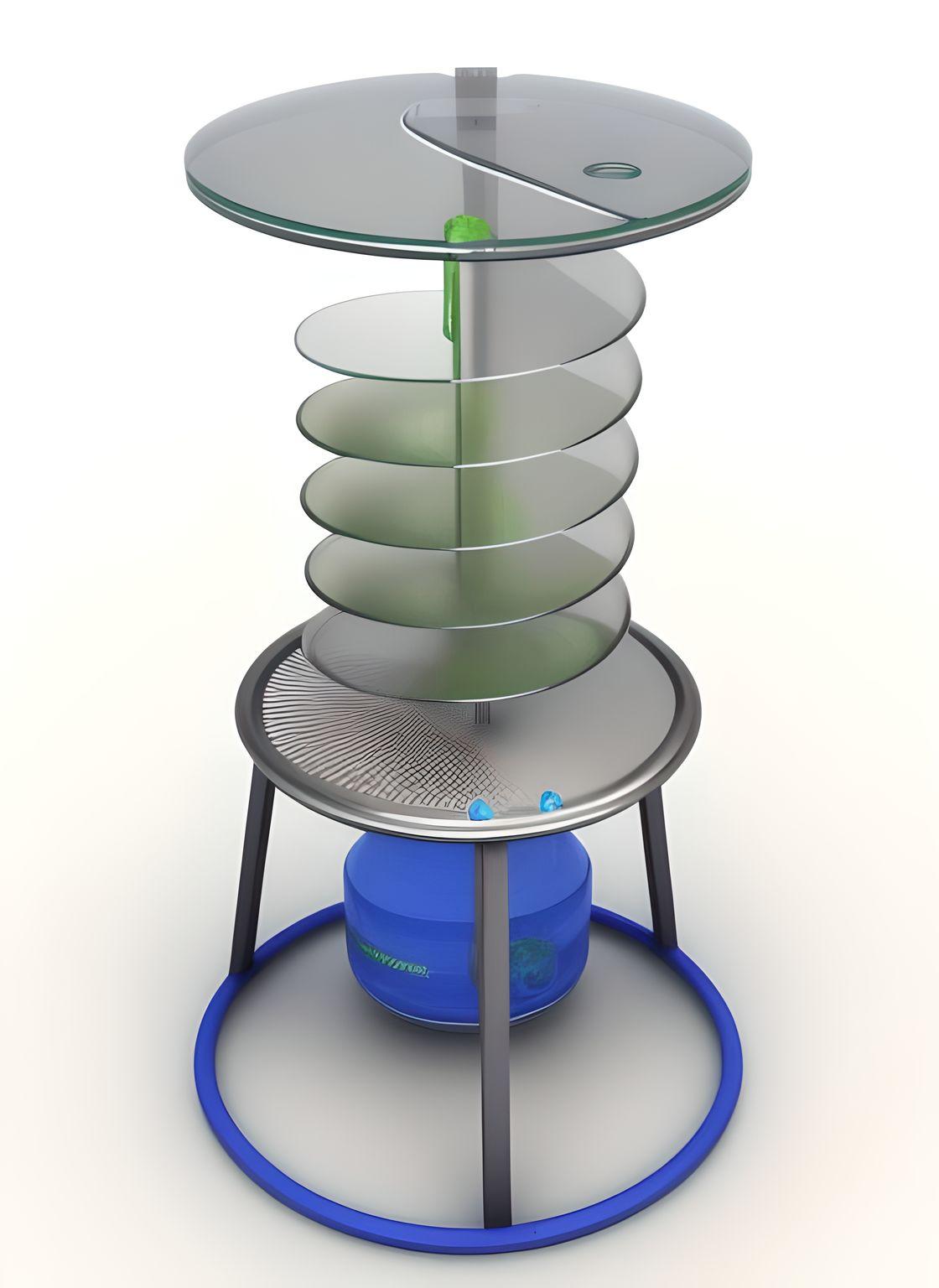}
        \caption{AI-enhanced image of a dew collector based on a ruled surface (helicoid) \cite{ziatdinov1}.}
     \end{subfigure}
     \hfill
     \begin{subfigure}[b]{0.45\textwidth}
         \centering
        \includegraphics[width=\linewidth]{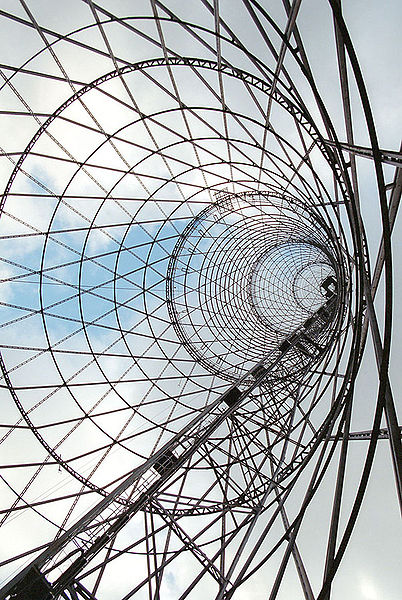}
        \caption{The grid shell of the Shukhov Tower in Moscow, whose sections are doubly ruled. The image has been adapted from Wikipedia.}
     \end{subfigure}
     \caption{\quad   Applications of ruled surfaces.}
        \label{fig:three graphs}
\end{figure}

\begin{figure}
        \centering
        \includegraphics[width=\linewidth]{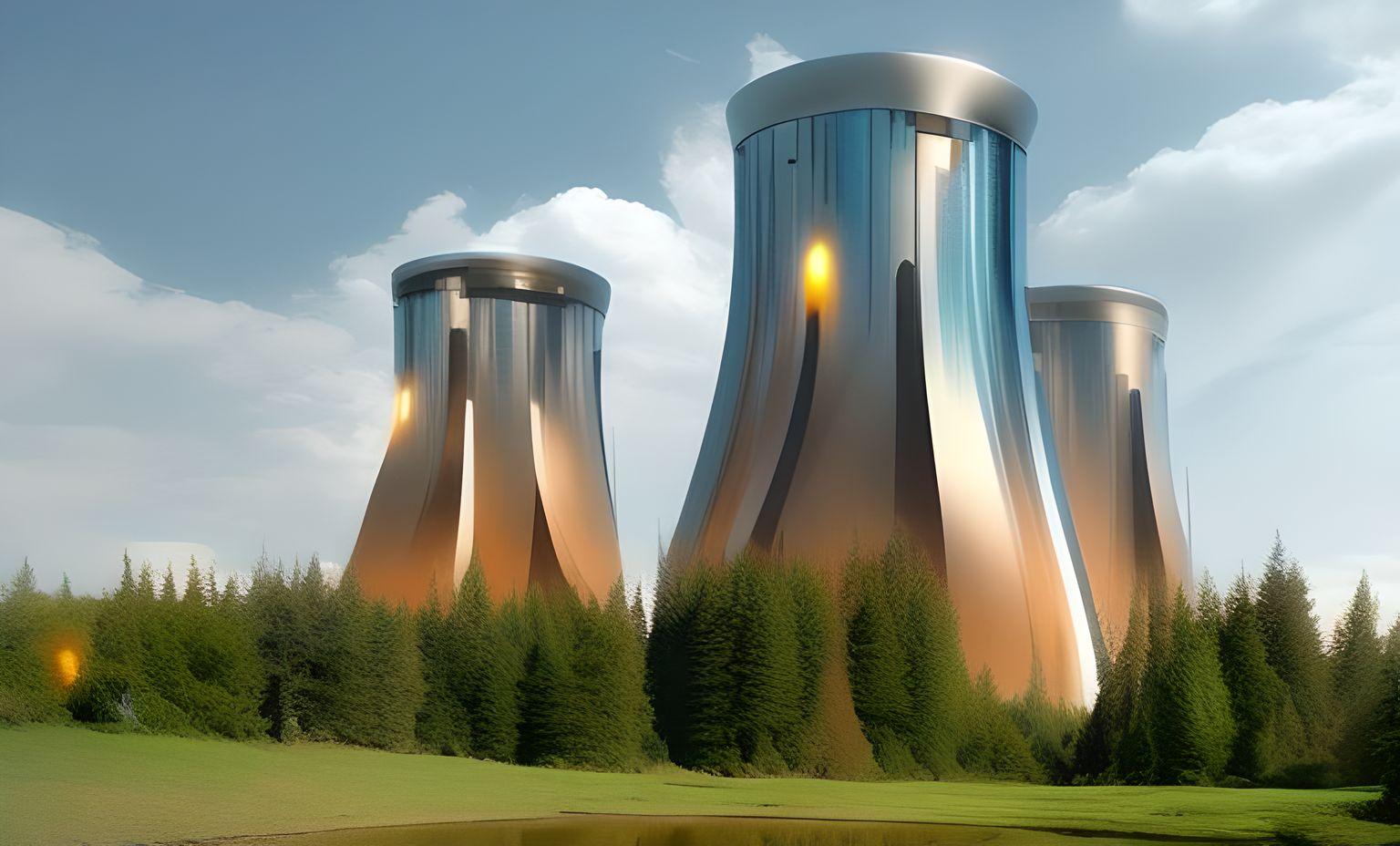}
        \caption{AI image generated from the Wikipedia photo of hyperbolic cooling towers (the surface can be doubly ruled) at Didcot Power Station, UK.}
\end{figure}

Discussing the importance and applications of developable surfaces, the paper \cite{Zha} reviews techniques and patents in geometric modelling and highlights developments in this field and concludes that further research is needed in several key areas of developable surfaces. Some special curves construct ruled surfaces with the help of the Frenet frame and there are also very good papers on this subject such as \cite{izumiya, lawrence, pottman1, tang, liu, glaeser, ishikawa1, chalfant, At}.

The main references on developable surfaces, \cite{2,izumiya,lawrence,pottman1}, have shown that developable surfaces can be created by moving lines across the surfaces. In addition to differential geometric properties of developable surfaces, such as Gaussian and mean curvature, singularities are also studied. The main papers on singularities are \cite{3,izumiya,ishikawa1}.
However, there is no information on developable surfaces that can be created from the curvature axis of a curve.

The present work aims to fill this gap by investigating the properties of the developable surfaces generated by the curvature axis and their properties in differential geometry. The idea is to exploit the relationship between the curvature axis and the developable surface to obtain a new type of ruled surface with interesting properties. We provide a result similar to this \textit{correspondence principle} \cite{study}, but between space curves and developable surfaces. Our work explores the relationship between the curvature axis of a curve and the developable ruled surface it generates, filling a gap in the existing literature and contributing to the understanding of the relationship between curves and developable surfaces.

The paper is structured as follows: In Section \ref{preliminary}, we provide some preliminary definitions and concepts related to curves, curvature axis, ruled surfaces, and developable surfaces. In Section \ref{dev_ruled_surface}, we show that moving the curvature axis along a curve yields a developable surface. In Section \ref{curve_alpha}, we discuss finding \(\alpha\) from some types of developable surfaces. Finally, in Section \ref{singularities}, we study the singularities of the developable surface and conclude our work in Section \ref{conclusion}.

\section{Preliminary Concepts and Definitions}\label{preliminary}

This section provides a brief overview of some basic concepts and definitions related to space curves and their kinematic frames. In particular, we define what a curve is and how to calculate its curvature and torsion. We also define what a ruled surface is and give an example of a developable surface. Finally, we introduce the concept of a curvature axis, its properties, and its relationship to the Frenet framework.

\begin{Theorem}
    (\textit{The Frenet-Serret Formulas}) Let \(\alpha\) be a regular and smooth curve whose first and second derivatives (i.e., velocity and acceleration vectors) are not required to be proportional. Then there is a rigid motion on the curve $\alpha$ defined by its Frenet frame, \cite{2,3,Sch,struik,Som,UmYa,A.Gray}:
$$
\begin{aligned}
\mathbf{t}^{\prime} &=\quad \kappa \mathbf{n} \\
\mathbf{n}^{\prime} &=-\kappa \mathbf{t} \quad+\tau \mathbf{b} \\
\mathbf{b}^{\prime} &=-\tau \mathbf{n},
\end{aligned}
$$
\end{Theorem}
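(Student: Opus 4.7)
The plan is to reduce the theorem to the standard orthonormal-frame differentiation argument. First I would assume, without loss of generality, that $\alpha$ is parameterized by arc length $s$; the hypothesis that $\alpha'$ and $\alpha''$ are not proportional is precisely what guarantees that $\mathbf{t}' = \alpha''$ is a nonzero vector transverse to $\mathbf{t}$, so the normal direction is well defined. Define $\mathbf{t} = \alpha'$, $\kappa = \|\mathbf{t}'\|$, $\mathbf{n} = \mathbf{t}'/\kappa$, and $\mathbf{b} = \mathbf{t}\times\mathbf{n}$. By construction $\{\mathbf{t},\mathbf{n},\mathbf{b}\}$ is a positively oriented orthonormal basis at each point of the curve, so the map $s\mapsto(\mathbf{t}(s),\mathbf{n}(s),\mathbf{b}(s))$ is a smooth family in $SO(3)$, which is the promised rigid motion.

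Next I would derive the three formulas by expanding each derivative in the basis $\{\mathbf{t},\mathbf{n},\mathbf{b}\}$ and reading off the coefficients from the orthonormality relations. The first equation $\mathbf{t}' = \kappa\mathbf{n}$ follows immediately from the definitions. For the remaining two, I write
\[
\mathbf{n}' = a_1\mathbf{t}+a_2\mathbf{n}+a_3\mathbf{b},\qquad \mathbf{b}' = c_1\mathbf{t}+c_2\mathbf{n}+c_3\mathbf{b},
\]
and compute each coefficient by differentiating the identities $\mathbf{n}\cdot\mathbf{n}=1$, $\mathbf{b}\cdot\mathbf{b}=1$, $\mathbf{t}\cdot\mathbf{n}=0$, $\mathbf{t}\cdot\mathbf{b}=0$, $\mathbf{n}\cdot\mathbf{b}=0$. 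This yields $a_2=c_3=0$, $a_1=-\mathbf{n}\cdot\mathbf{t}'=-\kappa$, $c_1=-\mathbf{b}\cdot\mathbf{t}'=0$, and $c_2=-\mathbf{b}\cdot\mathbf{n}'=-a_3$. Setting $\tau:=a_3$ as the definition of torsion then packages everything into the three stated equations.

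The most delicate point is not the algebra but making sure the normal $\mathbf{n}$ is globally well defined and smooth; this is exactly where the non-proportionality hypothesis on $\alpha'$ and $\alpha''$ is used, since it keeps $\kappa>0$ and prevents $\mathbf{n}$ from being ill-defined. A secondary subtlety, if one wishes to state the result for a general regular parameterization rather than arc length, is to apply the chain rule and verify that $\kappa$ and $\tau$ are reparameterization invariants when one divides by $\|\alpha'\|$; I would mention this as a remark rather than carry out the computation, since the arc-length version already captures the geometric content and the extension is routine.
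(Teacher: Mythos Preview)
Your argument is correct and is the standard textbook derivation of the Frenet--Serret formulas via differentiation of the orthonormality relations. The paper does not actually supply a proof of this theorem: it states the formulas as a classical result and defers to the cited references (Pottmann--Wallner, Struik, Gray, etc.), where precisely the argument you sketch can be found, so your proposal is consistent with---and in fact fills in---what the paper leaves to the literature.
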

\noindent where $\kappa \neq 0$ and $\tau$ are the curvature and torsion functions of the curve $\alpha$, respectively, and \((\prime:=\frac{d}{ds})\) refers to the derivative of the curve with respect to its arclength parameterisation.
The curvature at a given point on a differentiable curve is essentially the curvature of the osculator circle, the circle that provides the best approximation of the curve near that point. This close relationship of the circle to curvature is due to the fact that it has the same curvature at the same point.

\begin{Definition}
An osculator circle is a planar geometric object. Therefore, it is determined by the elements {t,n} that stretch the tangent plane of the curve, which are the elements that determine the planar behavior of the curve at the point where it touches the curve, and the curvature of the curve, which is inversely proportional to the distance from that point to the center of the osculator circle. The center of the osculator circle at a given point $s_0$ on the curve $\alpha$ is as follows, \textcolor{blue}{\cite{2,3,Sch,Som}}:

\begin{equation}
    \mathrm{C}=\alpha\left(s_{0}\right)+\frac{1}{\kappa\left(s_{0}\right)} \mathbf{n}\left(s_{0}\right).
\end{equation}
\end{Definition}

\begin{Definition}
The tangents of a curve $\alpha$ form a surface along the curve. This surface is called the tangent surface. The lines on this tangent surface that intersect perpendicular to the tangents of $\alpha$ are called the involutes, say $\beta$, of the curve $\alpha$ and can be given as follows:
\begin{equation}
    \beta(s)=\alpha(s)+ (c-s)\mathbf{t}(s).
\end{equation}
where $c$ is constant, \textcolor{blue}{\cite{Sch,struik,Som}}.
\end{Definition}
Conversely, $\alpha$ is the evolute of $\beta$. But there is a difference; the fact that $\beta$ is perpendicular to the tangent of $\alpha$ means that its tangent lies in the normal plane of $\alpha$. So the equation of evolute is given by 
\begin{equation}
    \alpha(s)=\beta(s)+\frac{1}{\kappa(s)}\mathbf{n}(s)+\frac{1}{\kappa(s)}\cot{\left(\int{\tau(s) ds+c}\right)} \mathbf{b}(s).
\end{equation}
These curves are now \textit{involute-evolute} mates.

\begin{Definition}
At a point $s_0$ of the curve $\alpha$, there are spheres whose order of contact with the curve is greater than 3, and these spheres are called osculator spheres, \cite{Sch,Som}:

\[
\mathbf{m_0}=\alpha\left(s_{0}\right)+\frac{1}{\kappa\left(s_{0}\right)} \mathbf{n}\left(s_{0}\right)-\frac{\kappa'(s_0)}{\kappa(s_0)^2\tau(s_0)} \mathbf{b}\left(s_{0}\right).
\]
\end{Definition}

The locus of the centers of these spheres generates a line called the \textit{curvature axis}, which passes through the center of curvature of the curve.

\begin{Definition}
The curvature axis of a space curve \(\alpha\) is represented by the following equation:
\begin{equation}
\mathbf{d} =\alpha\left(s_{0}\right)+\frac{1}{\kappa\left(s_{0}\right)} \mathbf{n}\left(s_{0}\right)+\lambda \mathbf{b}\left(s_{0}\right).
\end{equation}
\end{Definition}

In this formula:

\begin{itemize}
\item \(\mathbf{d}\) represents a line at the point $\alpha\left(s_{0}\right)+\frac{1}{\kappa\left(s_{0}\right)} \mathbf{n}\left(s_{0}\right)$.
\item \(\alpha(s_0)\) is a position vector associated with a point on a curve at a particular parameter value \(s_0\).
\item \(\frac{1}{\kappa(s_0)} \mathbf{n}(s_0)\) is the normal component, where \(\kappa(s_0)\) represents the curvature of the curve at the point \(s_0\), and \(\mathbf{n}(s_0)\) is the unit normal vector at that point. This component ensures that \(\mathbf{d}\) is displaced from the curve in the direction of its normal.
\item \(\lambda \mathbf{b}(s_0)\) is the binormal component, where \(\mathbf{b}(s_0)\) is the unit binormal vector at the point \(s_0\), and \(\lambda\) is a scaling factor that determines how far \(\mathbf{d}\) is from the curve along the binormal direction \cite{Sch,struik, Som, HHH}.
\end{itemize}

This formula is often used to describe the position of a point near a curve, considering its position on the curve (\(\alpha(s_0)\)), the effect of curvature (\(\frac{1}{\kappa(s_0)} \mathbf{n}(s_0)\)), and the binormal direction (\(\lambda \mathbf{b}(s_0)\)). It's a part of the mathematics used to study the geometry of curves in 3D space. Generally speaking, there is a correspondence between a point lying on the curve and a line (see Fig.\ref{fig:cur axs}).

\begin{figure}[]
\centering
\includegraphics[width=0.7\textwidth]{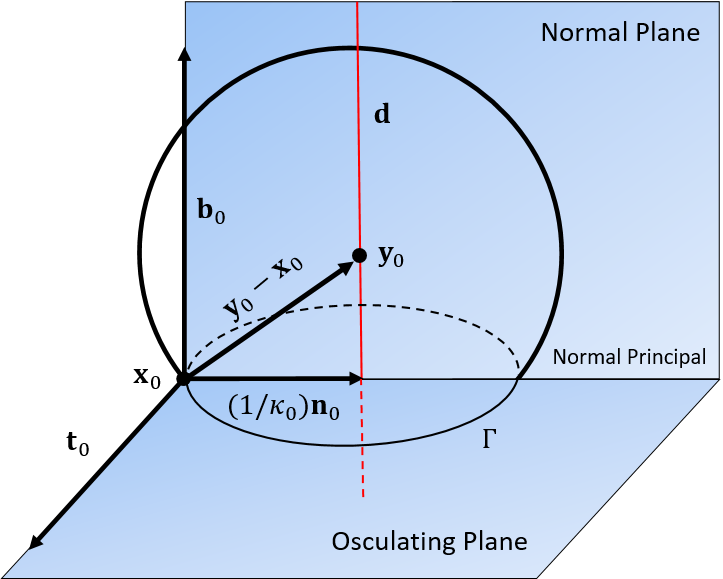}
\caption{The curvature axis (in red) of a regular curve $\alpha$ and its Frenet frame.}
 \label{fig:cur axs}
\end{figure}

\begin{Definition} A ruled surface is a surface $\mathbf{R}$ that admits a parameterization of the form
$$
\mathbf{R}(u, v)=\mathbf{x}(u)+v \mathbf{y}(u), \qquad u,v \in \mathbb{R},
$$
where $\mathbf{x}(u)$ and $\mathbf{y}(u)$ are curves called the base curve and directrix of $\mathbf{R}$ and \(v \in \mathbb{R}\), respectively, with $||\mathbf{y}(u)||=1$, \textcolor{blue}{\cite{2}}.
\end{Definition}

If $u$ is fixed, i.e. $u=u_0$ then $\mathbf{R}(u_0, v)$ generates a line $\mathbf{L}_{u_0}$ that passes through $\mathbf{x}(u_0)$ and has unit direction vector $\mathbf{y}(u_0)$. These lines are called the rulings of $\mathbf{R}$. If $\mathbf{y}(u)$ is constant, then $\mathbf{R}$ is called a cylinder. If $\mathbf{y}^{\prime}(u)$ never vanishes, then $\mathbf{R}$ is called a non-cylindrical surface.

A parameterisation for non-cylindrical ruled surfaces can also be given by \textit{striction curves}, \cite{pottman1}. This curve does not depend on the base curve on the surface and does not have to be regular. If we want to re-parameterise the developable surface with this curve, then we have

\[
\mathbf{R}(s,\lambda)=\mathbf{x}(s)+\lambda \mathbf{y}(s).
\]

Note that \(\mathbf{y}(s)\) is parallel to $\mathbf{x}'(s)$. 

\begin{Remark} The characterization of a developable surface can be determined by the following equation, \cite{pottman1}:

\begin{equation}\label{dc}
\det(\mathbf{x}^{\prime}(s), \mathbf{y}(s), \mathbf{y}^{\prime}(s))=0.
\end{equation}
\end{Remark}
The equation \(\det(\mathbf{x}^{\prime}(s), \mathbf{y}(s), \mathbf{y}^{\prime}(s))=0\) is a condition that must be satisfied for a curve parameterized by $s$ to be a trajectory of a certain type of differential equation. This type of differential equation is known as a Hamiltonian system \cite{Gold, Fow,Stg}.\\
Helicoid, which has many applications in engineering \cite{ziatdinov1}, is a very good example of this type of developable surface. It is also known as the tangent-developable surface of a circular helix curve.

\section{Construction of the Developable Ruled Surface}\label{dev_ruled_surface}

Here we investigate the geometric properties of the developable surface that is generated by the curvature axis of a curve and how these properties are related to the curvature and torsion of the curve.
The developable surface can be described by several parameters, such as its generators, directrices, and ruling lines, which can be calculated using the curvature and torsion of the curve.
Curvature theory encompasses several closely related concepts in geometry. Essentially, curvature describes the extent to which a curve differs from being perfectly flat or a surface from being perfectly flat. Flat surfaces are generally known as developable surfaces.

The relationship between the parameters of the developable surface and the curvature and torsion of the curve is important because it helps us understand the behavior of the surface and how it changes with variations in the shape of the curve. For example, changes in the curvature of the curve can affect the shape of the developable surface and its parameters, which in turn can affect its suitability for various applications such as architecture, product design, mechanical materials, manufacturing, etc.

We give the derivation of the equations for the curvature axis of a curve. If we consider this axis (line) through the curve, then we get a ruled surface.

\begin{Theorem}: Let \(\alpha=\alpha\left(s\right)\) be a smooth curve and \(s\) be arc-length parameterization and \(\{\mathbf{t},\mathbf{n},\mathbf{b},\kappa,\tau\}\) be its Frenet - Serret apparatus, then the trajectory

\begin{equation}\label{ca}
\mathbf{R}(s,\lambda)=\alpha\left(s\right)+\frac{1}{\kappa\left(s\right)} \mathbf{n}\left(s\right)+\lambda \mathbf{b}\left(s\right)
\end{equation}
generates a developable surface.
\end{Theorem}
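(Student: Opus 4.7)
The plan is to apply the developability criterion stated in the Remark (equation \ref{dc}) directly to the parameterization \eqref{ca}. With $\mathbf{x}(s) = \alpha(s) + \frac{1}{\kappa(s)}\mathbf{n}(s)$ playing the role of base curve and $\mathbf{y}(s) = \mathbf{b}(s)$ playing the role of unit directrix (note $\|\mathbf{b}\|=1$ automatically), the goal reduces to showing
\[
\det\bigl(\mathbf{x}'(s),\,\mathbf{b}(s),\,\mathbf{b}'(s)\bigr)=0.
\]

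First, I would differentiate $\mathbf{x}(s)$ using the Frenet-Serret formulas, being careful with the factor $1/\kappa$ (which is well-defined since the Theorem assumes $\kappa\neq 0$). The computation yields $\mathbf{x}' = \mathbf{t} + (1/\kappa)'\mathbf{n} + (1/\kappa)\mathbf{n}'$, and the key algebraic observation is that the $\mathbf{t}$ term coming from $\alpha'=\mathbf{t}$ cancels exactly with the $\mathbf{t}$ term produced by $(1/\kappa)\mathbf{n}' = (1/\kappa)(-\kappa\mathbf{t}+\tau\mathbf{b})$. What remains is a vector lying in the plane $\operatorname{span}\{\mathbf{n},\mathbf{b}\}$, namely a linear combination with coefficients in $\kappa'$, $\kappa$, and $\tau$.

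Next, I would note that $\mathbf{b}'(s) = -\tau(s)\mathbf{n}(s)$, which is obviously also in $\operatorname{span}\{\mathbf{n},\mathbf{b}\}$, as is $\mathbf{b}$ itself. Thus all three vectors $\mathbf{x}'$, $\mathbf{b}$, $\mathbf{b}'$ lie in the same two-dimensional subspace, forcing their triple product (and hence the determinant) to vanish identically. By \eqref{dc}, this proves that $\mathbf{R}(s,\lambda)$ is developable.

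There is no genuine obstacle here: the proof is a short direct verification, and the only conceptual point worth emphasizing is the cancellation of the tangential component, which is precisely the geometric statement that the locus of centers of osculating circles moves perpendicularly to $\mathbf{t}$ — a fact ultimately responsible for confining $\mathbf{x}'$, $\mathbf{b}$, $\mathbf{b}'$ to the normal plane of $\alpha$. If I wanted to make the proof more structural, I could phrase the conclusion as: since the rulings $\mathbf{b}(s)$ and the velocity of the base curve both lie in the normal plane $\operatorname{span}\{\mathbf{n},\mathbf{b}\}$, the tangent planes along a ruling coincide, which is one of the standard equivalent definitions of a developable ruled surface.
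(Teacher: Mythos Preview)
Your proposal is correct and follows essentially the same approach as the paper: identify $\mathbf{x}(s)=\alpha(s)+\frac{1}{\kappa(s)}\mathbf{n}(s)$ and $\mathbf{y}(s)=\mathbf{b}(s)$, compute $\mathbf{x}'$ and $\mathbf{y}'=-\tau\mathbf{n}$ via the Frenet--Serret formulas, and observe that the determinant in \eqref{dc} vanishes. Your write-up is in fact slightly more explicit than the paper's, since you spell out \emph{why} the determinant is zero (all three vectors lie in $\operatorname{span}\{\mathbf{n},\mathbf{b}\}$) and highlight the cancellation of the tangential component, whereas the paper simply records the computed vectors and asserts that the condition ``is fulfilled.''
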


\begin{proof} 
In order for \(R(s,\lambda)\) to be a developable surface, it must satisfy equation \eqref{dc}:
In terms of fitting into the equation,

\[
\mathbf{x}(s)=\alpha(s)+\frac{1}{\kappa(s)}\mathbf{n}(s),\qquad \mathbf{y}(s)=\mathbf{b}(s).
\]

\noindent Using the Frenet-Serret kinematic formulation given for the curve $\alpha$, we get

\[
\mathbf{x}'(s)=\mathbf{t}(s)+\frac{\kappa'(s)}{\kappa(s)^2}\mathbf{n}+\frac{1}{\kappa(s)}\left(-\kappa(s)\mathbf{t}(s)+\tau(s)\mathbf{b}(s)\right),\qquad \mathbf{y}'(s)=-\tau\mathbf{n}(s).
\]

\noindent So the developability condition
\[
\det(\mathbf{x}'(s),\mathbf{y}(s),\mathbf{y}'(s))=0
\]
is fulfilled.
\end{proof}

\section{Finding the curve \(\alpha\)}\label{curve_alpha}
What we have done so far is to find the developable surfaces corresponding to the given space curves. Now let us try to find the curve \(\alpha\) from a given ruled surface. For this, we need to look at the classes of developable surfaces.\\
In general, except planes, there are three types of developable ruled surfaces, \cite{A.Gray}\footnote{Readers interested in the classification of non-developable ruled surfaces are referred to the work \cite{hliu}.}. For each classification, we assume the equation of the ruled surface as follows:
\begin{equation}\label{frst}
    \mathbf{R}(s,\lambda)=\mathbf{x}(s)+\lambda\mathbf{e}(s).
\end{equation}
All comparisons below are made according to equation \eqref{frst}.

\subsection{Generalized Cylinder-developable surfaces}

\begin{equation}\label{cyd}
    \mathbf{R}(s,\lambda)=\mathbf{x}(s)+\lambda\mathbf{e}
\end{equation}
where the vector $\mathbf{e}$ is constant with the identities \(\mathbf{e} \neq 0, ||\mathbf{e}||=1\) and \(\mathbf{x}(s): I \subset \mathbb{R} \rightarrow \mathbb{R}^3\) respectively.
\subsection{Conical-developable surfaces}
\begin{equation}\label{cod}
    \mathbf{R}(s,\lambda)=\mathbf{x}+\lambda\mathbf{e}(s)
\end{equation}
where \(\mathbf{e}(s): I \subset \mathbb{R} \rightarrow \mathbb{R}^3\) is a smooth unit vector field and $\mathbf{x}$ is a point in \(\mathbb{R}^3\).
\subsection{Tangent-developable surface}
These surfaces are generated by moving a straight line that is parallel to the tangent along the curve.

\begin{equation}\label{tds}
\mathbf{R}(s,\lambda) = \mathbf{x}(s)+\lambda\mathbf{x}'(s)    
\end{equation}
of the curve \(\mathbf{x}(s)\) (\(\mathbf{x}(s) \in C^2\)) i.e. the curve of regression.

\subsection{Comparisons}
To determine whether the surfaces generated by the curvature axis of a curve fit with cylindrical, conical, or tangent surfaces, we can compare equation \eqref{ca} with the equations for these types of surfaces. This allows us to examine the geometric properties of the surfaces and determine their compatibility with these types of surfaces.

\begin{itemize}
\item If the surface is cylindrical, then from \eqref{ca} and \eqref{cyd} we get
\end{itemize}

\[
\mathbf{x}(s)=\alpha\left(s\right)+\frac{1}{\kappa\left(s\right)} \mathbf{n}\left(s\right), \quad \mathbf{b}(s)=\mathbf{e}.
\]

The binormal of the curve \(\alpha(s)\) is a constant vector, which means that \(\alpha(s)\) is a planar curve, i.e., \(\tau=0\).
\begin{itemize}
\item If the surface is conic-developable, then we get from the equations \eqref{ca} and \eqref{cod} following differential equation,
\end{itemize}

\[
\alpha''(s)+\alpha(s)=0.
\]

Solving this differential equation gives us the following coordinate functions:

\[
\alpha_i(s)=c_{i1}\sin{s}+c_{i2}\cos{s}
\]

\noindent where \(c_{i1} \text{ and } c_{i2}\) are constants.
\begin{itemize}
\item If the surface is of the tangential type, then from equations \eqref{ca} and \eqref{tds}, we get the following relations:
\end{itemize}

\begin{equation}\label{1}
\mathbf{x}(s) = \alpha(s)+\frac{1}{\kappa(s)} \mathbf{n}(s)
\end{equation}

\noindent and
\begin{equation}\label{2}
\mathbf{x}'(s) = \mathbf{b}(s).
\end{equation}

So, by differentiating equation \eqref{1} and substituting it into equation \eqref{2}, we get

\[
\mathbf{t}(s)-\frac{\kappa'(s)}{\kappa(s)^2} \mathbf{n}(s)+\frac{1}{\kappa(s)}[-\kappa(s)\mathbf{t}(s)+\tau(s)\mathbf{b}(s)] =\mathbf{b}(s)
\]
then

\[
-\frac{\kappa'(s)}{\kappa(s)^2} \mathbf{n}(s)+\frac{\tau(s)}{\kappa(s)}\mathbf{b}(s)=\mathbf{b}(s)
\]
resulting

\[
\kappa(s)=const, \quad \tau=\kappa.
\]
This means that the curve $\alpha$ is a slant helix. If we rewrite equation \eqref{1}, we get the following second-order linear ordinary differential equation (ODE) with variable coefficients as follows:

\[
\frac{1}{\kappa^2}\alpha''(s)+\alpha(s)=\mathbf{x}(s).
\]
Assuming $\kappa \neq 0$, differential equations for all coordinate functions $\alpha_i(s)$ of the curve $\alpha$ we get

\[
\alpha''(s)+\kappa^2 \alpha(s)=\kappa^2 \mathbf{x}(s).
\]
Solving these differential equations, we obtain the coordinate functions $\alpha_i(s)$ of the curve \(\alpha(s)\) as follows:

\begin{equation}
\alpha_{{i}} \left( s \right) =\sin \left( \kappa s \right) {C_{i1}}+\cos \left( \kappa s \right) {C_{i2}}+\kappa \left(\sin \left( \kappa s \right) \int \!\cos \left( \kappa s \right) x_{{i}} \left( s \right) {ds} -\cos \left( 
\kappa s \right) \int \!\sin \left( \kappa s \right) x_{{i}} \left( s \right) {ds} \right)
\end{equation}

\noindent where $C_{i1}, C_{i2}$ are constants.

\begin{Corollary} Thus, if we have a developable surface, there exists at least one curve \(\alpha\) for which every line on the developable surface can serve as a curvature axis at every corresponding regular point. There are, of course, many more if constant numbers \(C_i\) change.
\end{Corollary}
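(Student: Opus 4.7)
The plan is to invoke the classical fact that every non-planar developable surface is locally either a generalized cylinder \eqref{cyd}, a cone \eqref{cod}, or a tangent-developable \eqref{tds}, and to handle these three cases separately. For each class the Comparisons subsection has already derived the necessary conditions that a generating curve $\alpha$ must satisfy in order for its curvature axis \eqref{ca} to coincide with the prescribed family of rulings, so all that remains is to check that these conditions are in fact satisfiable and that the solution set contains a free parameter, which immediately yields the "many more" part of the statement.

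Concretely, in the cylindrical case I would take $\alpha$ to be any planar curve lying in a plane orthogonal to $\mathbf{e}$ whose evolute coincides with the prescribed base curve $\mathbf{x}(s)$; existence reduces to the standard involute construction with respect to $\mathbf{x}(s)$, and the constant of integration appearing there gives a one-parameter family of valid $\alpha$. In the conical case, the derived ODE $\alpha''+\alpha=0$ has general solution $\alpha_i(s)=c_{i1}\sin s+c_{i2}\cos s$ with six real constants $c_{ij}$, and one checks that a generic choice produces a non-degenerate curve whose curvature axis passes through the apex along $\mathbf{e}(s)$. In the tangent-developable case, the identifications $\kappa=\mathrm{const}$ and $\tau=\kappa$ select $\alpha$ as a slant helix, and the explicit second-order ODE $\alpha''+\kappa^{2}\alpha=\kappa^{2}\mathbf{x}$ already solved in the text gives coordinate functions with two free constants $C_{i1},C_{i2}$ per coordinate, producing an infinite family of admissible curves.

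The main obstacle, and the reason for the hedged phrasing of the statement, is regularity: the curvature axis \eqref{ca} is only defined where $\kappa(s)\neq 0$ and the Frenet frame exists, so the constructed $\alpha$ from each case need not realise every ruling of the given surface, only those corresponding to regular points of $\alpha$. One therefore interprets the corollary as saying that on an open dense subset of the parameter interval the constructed $\alpha$ fits \eqref{ca}, with the remaining exceptional parameters corresponding precisely to the singularities of the developable surface that are treated in Section~\ref{singularities}. Once this regularity caveat is acknowledged, each of the three cases produces at least one curve $\alpha$, and varying the constants $c_{ij}$ or $C_{ij}$ produces infinitely many, which is exactly what the corollary asserts.
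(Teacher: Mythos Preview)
Your proposal is correct and follows essentially the same route as the paper: the Corollary is not given a separate proof there but is stated as an immediate consequence of the three-case analysis in the Comparisons subsection, which is precisely the decomposition you invoke. Your added remarks on the involute construction in the cylindrical case and on the regularity caveat are sensible elaborations that go slightly beyond what the paper spells out, but the underlying argument is the same.
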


\begin{Example} We give a tangent-developable surface example. Let \(\mathbf{x}(s)\) be a standard cylindrical helix:
\[
\mathbf{x}(s) = (\cos(s), \sin(s), s)
\]
where its curvature is \(\kappa = \frac{1}{2}\). Constants are as follows: \(c_{11}=1,c_{12}=1,c_{21}=1,c_{22}=0,c_{31}=1,c_{32}=0\). Then from the equations of (6), we have

\begin{align*}
\alpha_1(s)&=-\frac{2 \left(\cos^{2}\left(\frac{s}{2}\right)\right)}{3}+\cos \! \left(\frac{s}{2}\right)+\sin \! \left(\frac{s}{2}\right)+\frac{1}{3},\\ 
\alpha_{2}( s)&=-\frac{2 \sin \! \left(\frac{s}{2}\right) \cos \! \left(\frac{s}{2}\right)}{3}+\sin \! \left(\frac{s}{2}\right),\\ 
\alpha_{3}(s)&=s +\sin \! \left(\frac{s}{2}\right).
\end{align*}

Thus, as can be seen in Fig.\ref{fig:tds}, the \(\alpha(s)\) curve corresponding to the \(\mathbf{x}(s)\) curve is found.
\end{Example}

\begin{figure}
\centering
\includegraphics[width=0.7\textwidth]{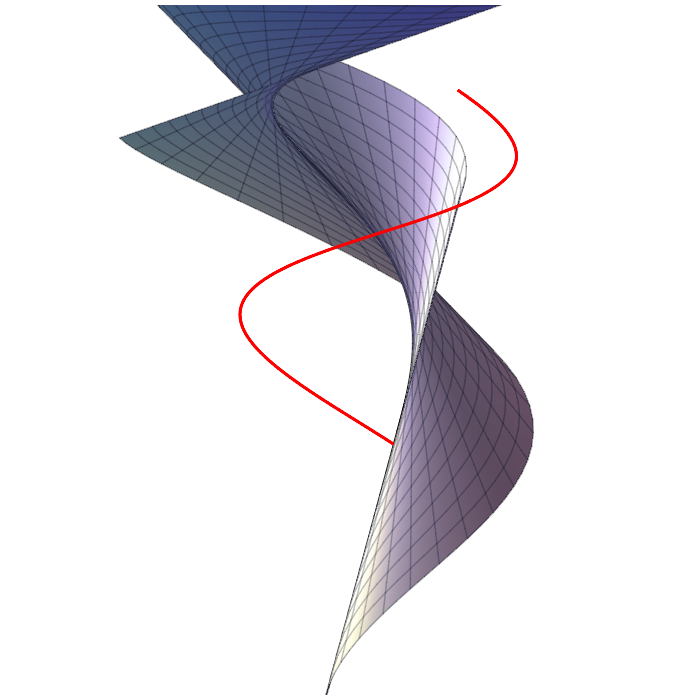}
\caption{The tangent developable surface corresponding to $\mathbf{x}(s)$(red one).}
\label{fig:tds}
\end{figure}

\section{Singularities}\label{singularities}

A singular point on a surface is a point at which the usual rules of calculus, such as continuity or differentiability, break down. Singularities are crucial to understanding the behavior of surfaces, especially in the context of deformations and transformations, and are given special attention in CAGD. Fig. \ref{singularity} shows an example of a surface with a cusp singularity created in Rhinoceros 3D software.

\begin{figure}
  \centering
  \includegraphics[width=0.9\textwidth]{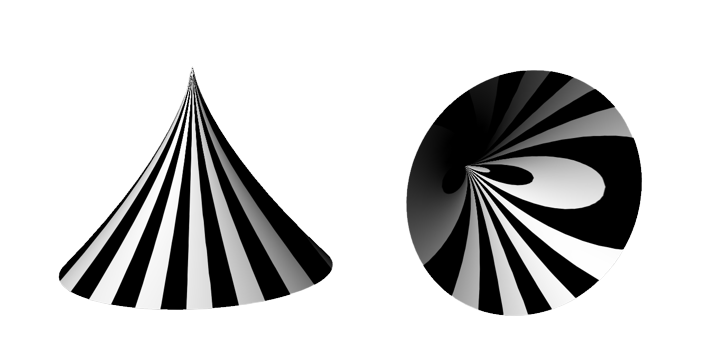}
  \caption{A surface of revolution with a cusp singularity created by rotating a cuspidal NURBS curve around an axis. Zebra stripes on the outer and inner parts of the surface behave in a specific way around the singularity.}
  \label{singularity}
\end{figure}

Another good way to illustrate singularities on surfaces is to use an environment map with uniformly distributed white circles on a black background. In this case, the circles will form flower-like patterns around the singularity (Fig. \ref{singularity_circ}). An environment map is a surface analysis tool used in CAD systems to visually evaluate surface smoothness, curvature and other important properties using an image reflected in the surface. In principle, any image can be used as an environment map, and readers interested in environment maps in Rhinoceros 3D software are referred to \cite{mcneel2023}. Also, readers should not confuse environment maps with environments that have the *.renv file extension.

\begin{figure}
  \begin{minipage}{0.3\textwidth}
    \centering
    \includegraphics[height=4.5cm]{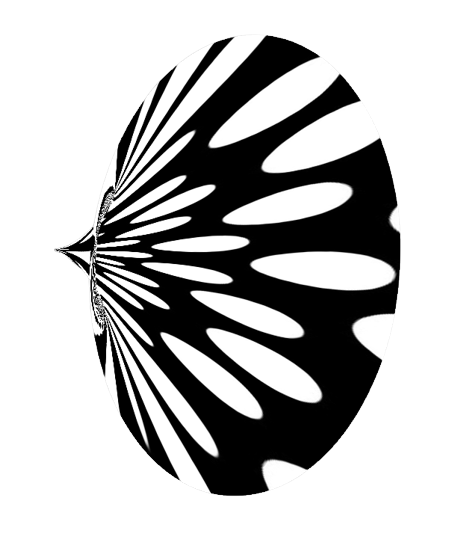} 
  \end{minipage}%
  \begin{minipage}{0.7\textwidth}
    \centering
    \includegraphics[height=4.5cm]{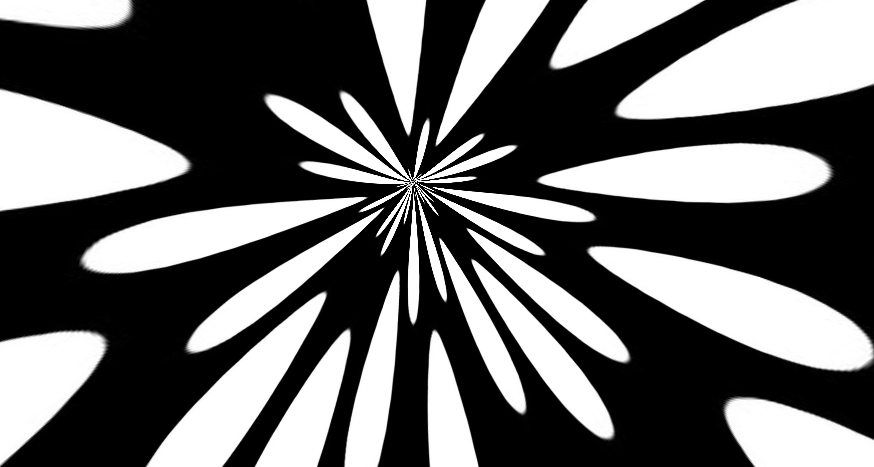} 
  \end{minipage}
  \caption{Environmental map with a circular pattern applied to the surface with a singularity. White circles tend to be elongated and increase in number around a single point, forming a flower-like pattern.}
  \label{singularity_circ}
\end{figure}

Combining these concepts, singularities of a developable surface refer to specific points on a developable surface where the surface can't be flattened smoothly. These points are particularly interesting in applications involving surface unfolding, such as paper folding, sheet metal fabrication, or computer-aided design.

There are several types of developable surface singularities. For example, cusps occur when a surface folds back on itself, creating a sharp point. Imagine folding a piece of paper; the folded corner forms a bump. The other is self-intersection. These occur when a surface intersects itself. In the context of developable surfaces, this means that the surface folds in such a way that parts of it intersect.

Developable surfaces with singularities are investigated in \cite{ishikawa1,3}.
There are three types of singularities: cuspidal edge, swallowtail, and cuspidal cross-cap, see Fig. \ref{singularity_circ}.

\begin{figure}
  \begin{minipage}{0.3\textwidth}
    \centering
    \includegraphics[height=5cm]{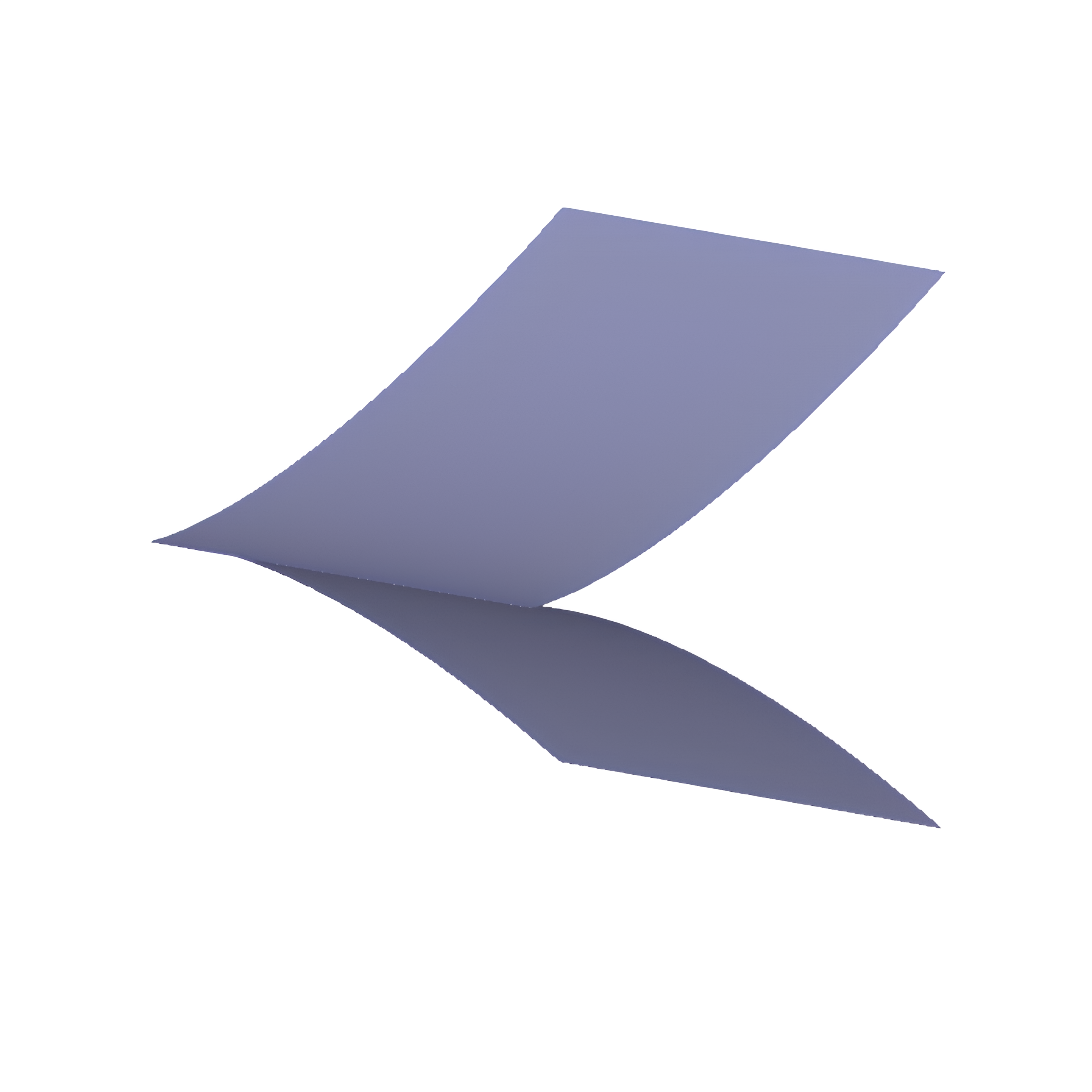} 
  \end{minipage}%
  \begin{minipage}{0.3\textwidth}
    \centering
    \includegraphics[height=5cm]{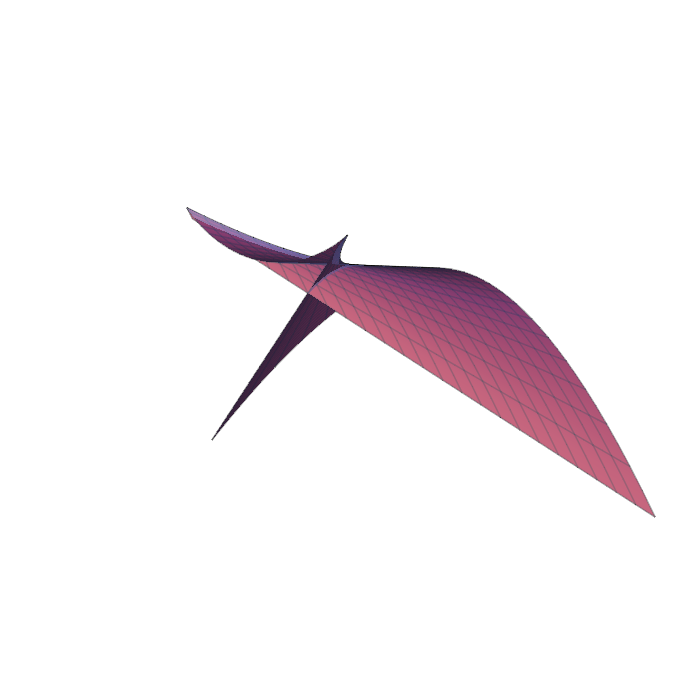} 
  \end{minipage}%
  \begin{minipage}{0.3\textwidth}
    \centering
    \includegraphics[height=5cm]{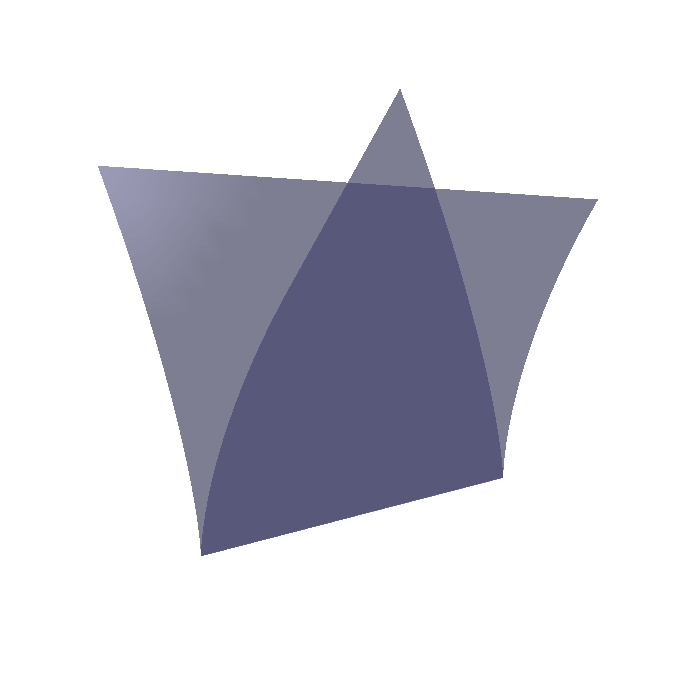} 
  \end{minipage}
  \caption{Singular developable surfaces: cuspidal-edge (left), swallowtail (center), and cuspidal cross-cap (right).}
  \label{singularity_circ}
\end{figure}

These surfaces exhibit singularities along the base curves (see Fig.\ref{fig:tds} ), \cite{A.Gray}.
The base curve of a tangent-developable surface is also its striction curve.

\begin{itemize}
    \item A tangent developable surface is considered a cuspidal edge surface along $\alpha(t)$ if $\tau(t) \neq 0$.
    \item The tangent developable of a space curve possesses a cuspidal cross-cap singularity at $\alpha\left(t_0\right)$ if $\tau\left(t_0\right) = 0$ and $\tau^{\prime}\left(t_0\right) \neq 0$.
\end{itemize}

These conditions are considered generic for space curves, making the cuspidal cross-cap singularity a common singularity of tangent developable surfaces.

Now we can check the conditions for space curves that correspond to tangent-developable surfaces. We know the striction curve of the tangent-developable surface carries the singularity of the surface. So these regular curves (\(\mathbf{x}\) in \eqref{ca}) correspond to regular curves \(\alpha\).

Vise-versa, we can construct developable surfaces via the curvature axis of singular space curves.

This time we have a space curve whose tangent vanishes identically at a point $t_0$. Therefore we can state the following theorem:
\begin{Theorem}
Let \(\alpha\) be a space curve that is singular at \(t_0\). Then the developable surface generated by the curvature axis of $\alpha$ has a singularity.
\end{Theorem}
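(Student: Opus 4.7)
The plan is to detect the singularity by computing the Jacobian $\mathbf{R}_t\times\mathbf{R}_\lambda$ of the parameterisation in a general (non-arclength) parameter $t$, and to observe how the hypothesis $\alpha'(t_0)=0$ forces a degeneration. Since arclength is not well-defined through a singular point of $\alpha$, I work with $t$ throughout and set $v(t)=\|\alpha'(t)\|$, so that $\alpha'=v\,\mathbf{t}$ and $v(t_0)=0$.

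First, I would invoke the general-parameter Frenet-Serret formulas $\dot{\mathbf{t}}=v\kappa\mathbf{n}$, $\dot{\mathbf{n}}=-v\kappa\mathbf{t}+v\tau\mathbf{b}$, $\dot{\mathbf{b}}=-v\tau\mathbf{n}$, and differentiate
\[
\mathbf{R}(t,\lambda)=\alpha(t)+\tfrac{1}{\kappa(t)}\mathbf{n}(t)+\lambda\mathbf{b}(t).
\]
By the same cancellation of tangential contributions used in the proof of the developability theorem, I expect to obtain
\[
\mathbf{R}_t=-\Bigl(\tfrac{\dot\kappa}{\kappa^{2}}+\lambda v\tau\Bigr)\mathbf{n}+\tfrac{v\tau}{\kappa}\mathbf{b},\qquad \mathbf{R}_\lambda=\mathbf{b},
\]
and therefore
\[
\mathbf{R}_t\times\mathbf{R}_\lambda=-\Bigl(\tfrac{\dot\kappa}{\kappa^{2}}+\lambda v\tau\Bigr)\mathbf{t}.
\]

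Next, I would evaluate this at $t_0$. The vanishing $v(t_0)=0$ kills the $\lambda v\tau$ term, so the Jacobian along the entire ruling over $t_0$ collapses to $-\bigl(\dot\kappa(t_0)/\kappa(t_0)^{2}\bigr)\,\mathbf{t}(t_0)$, independently of $\lambda$. Two subcases each exhibit a singularity. If $\dot\kappa(t_0)=0$ the cross product vanishes identically and $\mathbf{R}$ suffers a rank-drop on the whole ruling. If $\dot\kappa(t_0)\ne 0$, the would-be direction $\mathbf{t}(t_0)=\alpha'(t_0)/\|\alpha'(t_0)\|$ is the indeterminate form $0/0$ and, for a typical cusp-type singular point, admits distinct one-sided limits; the Frenet frame, and with it $\mathbf{R}$, then cannot extend continuously across $t_0$, and the ruling is again singular on the surface.

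The main obstacle will be making the second subcase fully rigorous: one must restrict attention to a regularity class for the singular curve — for instance cusp-type singular points, compatible with the cuspidal-edge / swallowtail / cuspidal cross-cap trichotomy recalled earlier — so that $\mathbf{n}$, $\mathbf{b}$, and $\kappa$ possess well-defined one-sided limits at $t_0$ and the Jacobian computation above remains meaningful. Once such a class is fixed, the vanishing of $v$ at $t_0$, which strips $\mathbf{R}_t$ of its $\mathbf{b}$-component and forces the Jacobian into the ambiguous direction $\mathbf{t}(t_0)$, delivers the surface singularity directly; a finer analysis of $\tau(t_0)$ and $\dot\kappa(t_0)$ should then classify the singularity type, in exact analogy with the tangent-developable case discussed just before the theorem.
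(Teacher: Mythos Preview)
The paper itself offers no proof of this theorem: after stating it, the authors pass directly to worked examples (the twisted cubic $(t^2,t^3,t^4)$, the 3D astroid, and the spherical nephroid) and exhibit the resulting singular developable surfaces pictorially. So there is nothing to compare your argument against except that empirical evidence; any correct argument you supply already goes beyond what the paper does.

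Your Jacobian computation is correct and is the right diagnostic: the identity $\mathbf{R}_t\times\mathbf{R}_\lambda=-\bigl(\dot\kappa/\kappa^{2}+\lambda v\tau\bigr)\,\mathbf t$ holds wherever the Frenet apparatus is defined, and the observation that $v(t_0)=0$ kills the $\lambda$-dependence is the heart of the matter. You are also right that the real work is the ``main obstacle'' you flag: at a genuine singular point of $\alpha$ the Frenet frame is undefined, so the parametrisation $\mathbf R$ itself must first be made sense of via one-sided limits or a frontal framework before the Jacobian calculation acquires meaning.

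One point you may be underestimating: in the paper's own prototype $(t^2,t^3,t^4)$ the curvature behaves like $\kappa\sim c\,|t|^{-1}$ near $t_0=0$, so $1/\kappa\to 0$ while $\dot\kappa/\kappa^{2}$ has distinct finite one-sided limits. Your dichotomy ``$\dot\kappa(t_0)=0$ versus $\dot\kappa(t_0)\ne 0$'' therefore does not cover the typical situation, in which neither $\kappa(t_0)$ nor $\dot\kappa(t_0)$ exists as a finite number. The robust version of your argument should track the asymptotics of the scalar $\dot\kappa/\kappa^{2}$ together with the (possibly sign-flipping) one-sided limits of $\mathbf t$, $\mathbf n$, $\mathbf b$; once that bookkeeping is done, your conclusion that the normal direction $\mathbf{R}_t\times\mathbf{R}_\lambda$ either vanishes or jumps along the ruling over $t_0$ stands, and yields an actual proof where the paper gives none.
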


\begin{Example}[3D Twisted Curve]
For example, consider the parametric singular space curve \((t^2,t^3,t^4)\). Obviously the curve has a singularity at the origin. If we calculate the curvature axis of this curve from the equation \eqref{ca}, then we have a cuspidal cross-cap swallowtail developable surface; see Fig.\ref{singularity_ccs}. This is a fascinating result.
\end{Example}

\begin{figure}
   \begin{minipage}{0.5\textwidth}
    \centering
    \includegraphics[height=5cm]{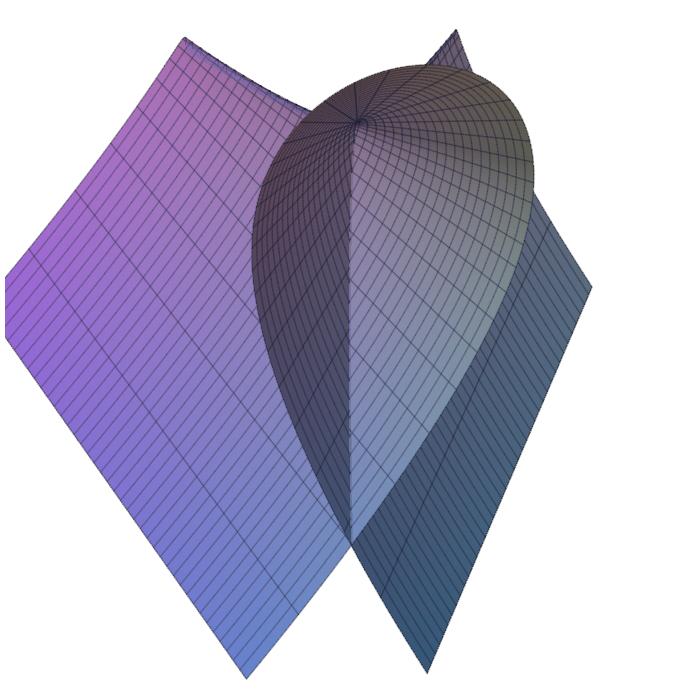} 
  \end{minipage}%
  \begin{minipage}{0.5\textwidth}
    \centering
    \includegraphics[height=5cm]{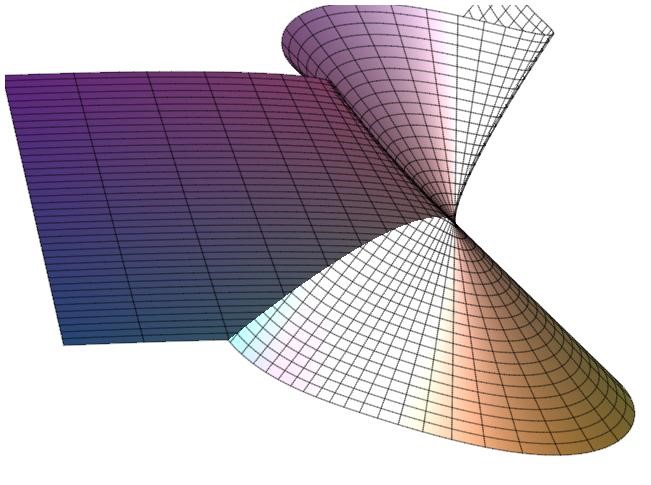} 
  \end{minipage}%
  \caption{Cuspidal cross-cap swallowtail developable surface generated by the curvature axis of the singular space curve \((t^2,t^3,t^4)\).}
  \label{singularity_ccs}
\end{figure}

\begin{figure}
  \centering
  \includegraphics[width=0.9\textwidth]{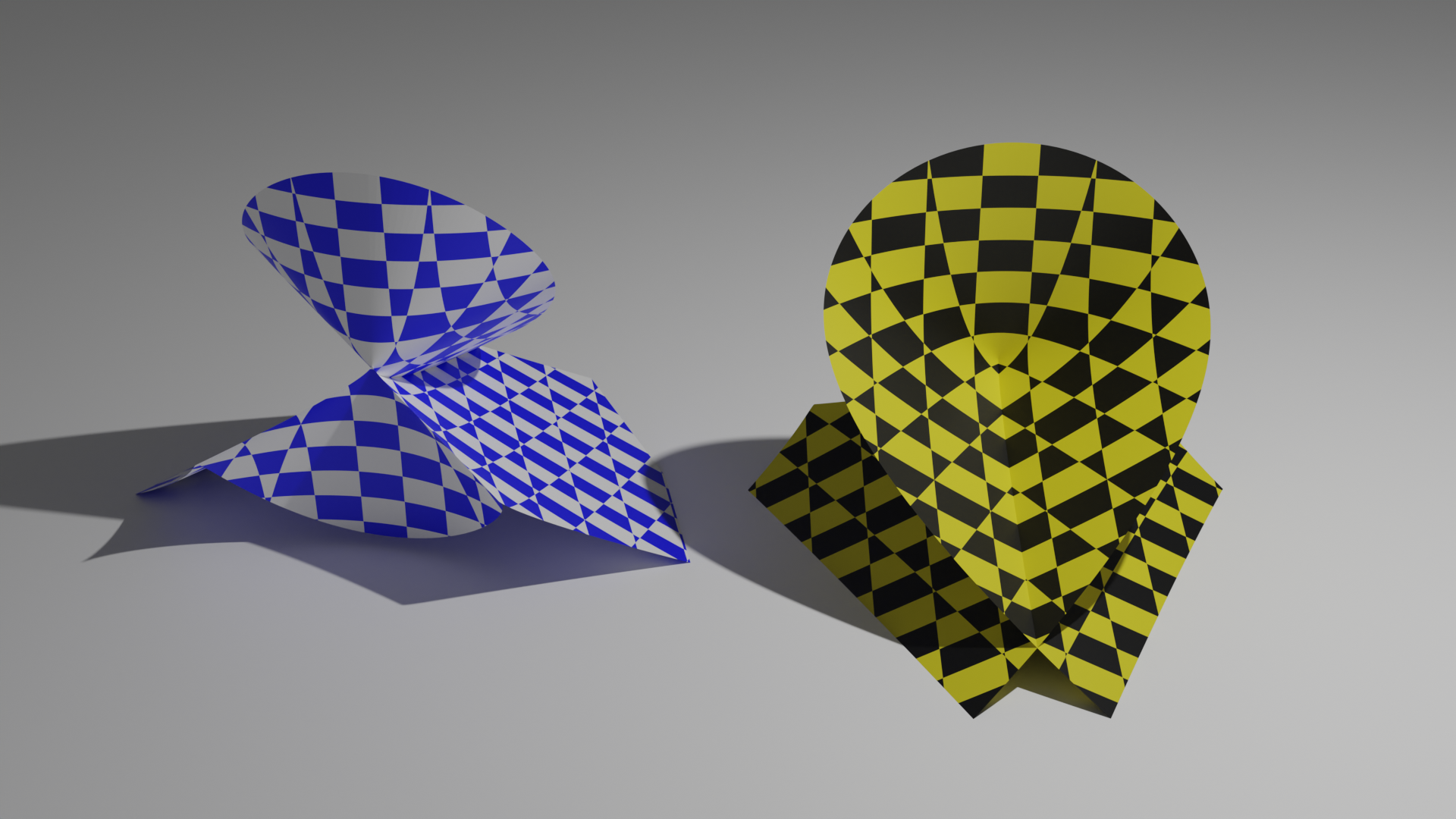}
  \caption{Cuspidal cross-cap swallowtail developable surface visualisation using checker textures, where self-intersection and singularity point are not clearly visible. The example shows that other surface quality methods such as zebra lines or Gaussian curvature visualisation can be used, or an environmental map with a circular pattern can be applied.}
  \label{crosscap}
\end{figure}

We can provide two more examples of singular space curves for the construction of singular developable surfaces.
\begin{Example}[3D Asteroid Curve]
Let $\alpha$ be parametrised as \((\cos(t)^3,\sin(t)^3,\cos(2t))\). Then its singular developable surface is described as follows (see Fig.\ref{astreoid}):
\begin{equation*}
\left(
-\frac{\cos \left(t \right) \left(110 \left(\cos^{2}\left(t \right)\right)-12 \lambda -125\right)}{15}, \frac{\sin \left(t \right) \left(110 \left(\cos^{2}\left(t \right)\right)-12 \lambda +15\right)}{15}, \cos \! \left(2 t \right)-\frac{3 \lambda}{5} 
\right)
\end{equation*}
\end{Example}

\begin{figure}
  \begin{minipage}{0.3\textwidth}
    \centering
    \includegraphics[height=5cm]{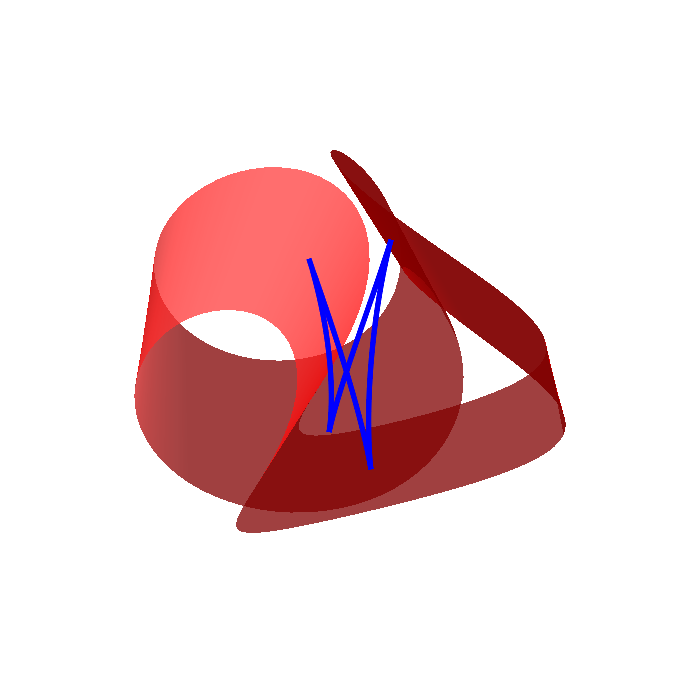} 
  \end{minipage}%
  \begin{minipage}{0.3\textwidth}
    \centering
    \includegraphics[height=5cm]{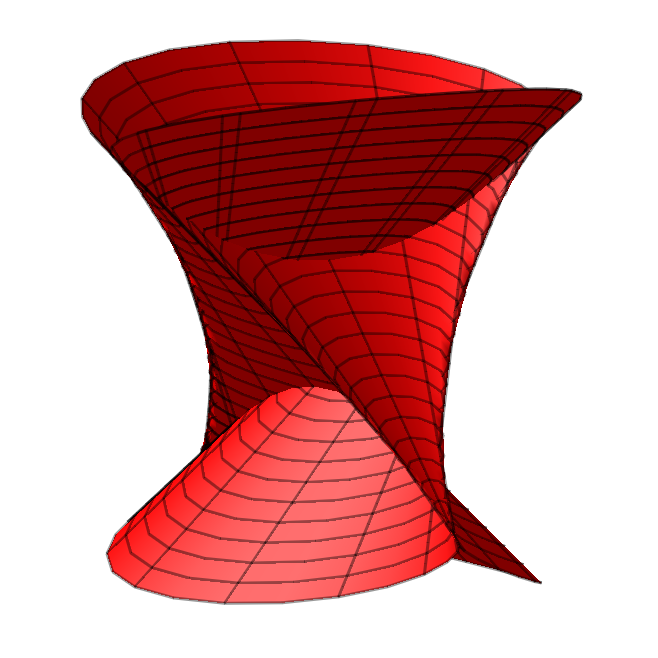} 
  \end{minipage}%
  \begin{minipage}{0.3\textwidth}
    \centering
    \includegraphics[height=5cm]{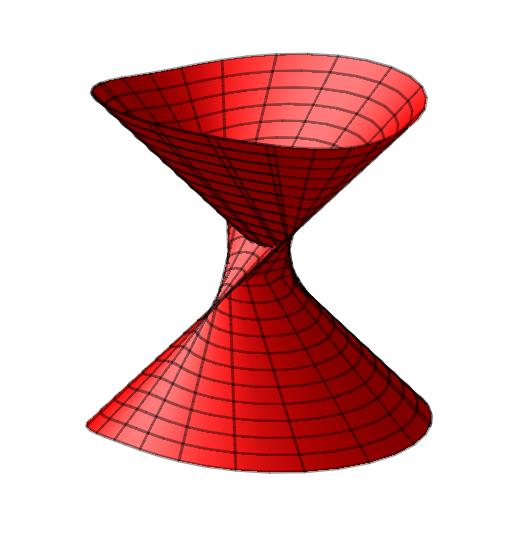} 
  \end{minipage}
  \caption{Singular developable surfaces associated with the 3D asteroid curve (blue). From left to right: Different values of $\lambda$.}
  \label{astreoid}
\end{figure}

\begin{Example}[The Spherical Nephroid Curve]
Let $\alpha$ parametrized as $$
\left(\frac{3}{4} \cos t-\frac{1}{4} \cos 3 t, \frac{3}{4} \sin t-\frac{1}{4} \sin 3 t, \frac{\sqrt{3}}{2} \cos t\right),
$$ then we get a cone-type singular developable surface (see Fig.\ref{fig:nefroid}).
\end{Example}

\begin{figure}
    \centering
    \includegraphics[width=0.5\linewidth]{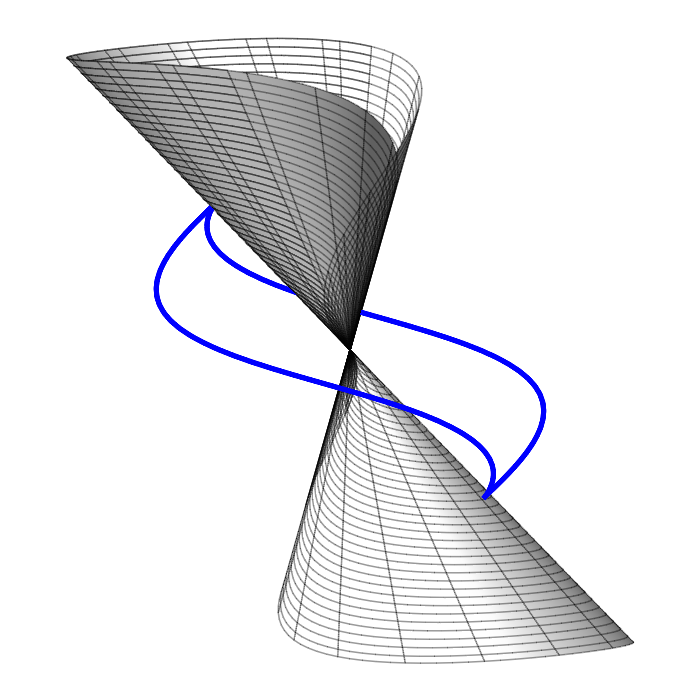}
    \caption{The spherical nefroid and its singular developable surface.}
    \label{fig:nefroid}
\end{figure}

\section{Conclusion and Recommendations for Future Work}\label{conclusion}

Motion is an essential component of our lives. The geometry of lines is one of the most practical areas of geometry, and the motion of the curvature axis is a classic topic of differential geometry. In this study, we examined the motion of the curvature axis of a curve that generates a ruled surface in space. We also demonstrated that for each developable surface, there exists at least one space curve whose curvature axis coincides with any line on the ruled surface. Thus, we established a correspondence between space curves and developable surfaces. Additionally, we shed light on the classifications of developable surfaces corresponding to space curves with singularities. Our research also contributes to the understanding of the singularities of developable surfaces, such as those that can be generated by moving a line along a cuspidal evolute of log-aesthetic curves \cite{yoshida1} approximated by B\'{e}zier curves.

Ruled surfaces are mathematically simple and elegant, and their practical implementation has a long history. However, new types of curves are being discovered that can be used as generators of ruled surfaces in the near future. For example, there is currently an interest in high-quality shapes that include \textit{log-aesthetic curves} \cite{miura1, yoshida1, yoshida2} and \textit{superspirals} \cite{ziatdinov2}, and these curves can be used to generate high-quality and aesthetic ruled surfaces. Such an approach may have applications in automotive and road design, as well as shipbuilding. We would also be interested in applying our work to the study of \textit{umbrella surfaces} \cite{nishikawa1}.

\end{document}